\newtheorem{thm}{Theorem}[section]
\newtheorem{lem}[thm]{Lemma}
\newtheorem{prop}[thm]{Proposition}
\theoremstyle{definition}
\theoremstyle{definition}
\newtheorem{construction}[thm]{Construction}
\theoremstyle{definition}
\theoremstyle{remark}
\newtheorem*{rem}{Remark}
\newcommand{\R}{{\mathbb R}}
\newcommand{\C}{{\mathbb C}}
\begin{document}
\def\X#1#2{r(v^{#2}\ds{\prod_{i \in #1}}{x_{i}})}
\def\skp#1{\vskip#1cm\relax}
\def\C{{\mathbb C}}
\def\R{{\mathbb R}}
\def\ce{{\mathbb C}}
\def\erre{{\mathbb R}}
\def\efe{{\mathbb F}}
\def\ene{{\mathbb N}}
\def\UNO{1\mkern-7mu1}
\def\ee{{\mathbb E}}
\def\pee{{\mathbb P}}
\def\ene{{\mathbb N}}
\def\tn{{\bf T$^{n}$}}
\def\pn{$P^{n}\/$}
\def\cn{{\bf C$^{n}$}}
\def\z2{{\bf Z}$_{2}$}
\def\zl2{{\bf Z}$_{(2)}$}
\def\block{\rule{2.4mm}{2.4mm}}
\def\rplus{{\bf R$_{+}}}
\def\nd{\noindent}
\def\becomes{\colon\hspace{-2,5mm}=}
\def\ds{\displaystyle}
\def\s{\sigma}
\def\ptheta{\mathbf{Z}[\theta_1,\cdots, \theta_n]}

\numberwithin{equation}{section}
\title{On monomial ideal rings and a theorem of Trevisan}
\skp{0.2}
\author[A.~Bahri]{A.~Bahri}
\address{Department of Mathematics,
Rider University, Lawrenceville, NJ 08648, U.S.A.}
\email{bahri@rider.edu}

\author[M.~Bendersky]{M.~Bendersky}
\address{Department of Mathematics
Department of Mathematics, Hunter College, East 695 Park Avenue, New
York, NY 10065, U.S.A.}
\email{mbenders@hunter.cuny.edu}

\author[F.~R.~Cohen]{F.~R.~Cohen}
\address{Department of Mathematics,
University of Rochester, Rochester, NY 14625, U.S.A.}
 \email{cohf@math.rochester.edu}


\author[S.~Gitler]{S.~Gitler}
\address{El Colegio Nacional, Gonzalez Obregon 24\,C, Centro Historico, Mexico City, Mexico.}  \email{sgitler@math.cinvestav.mx}

\subjclass[2000]{Primary: 13F55, Secondary:  55T20}

\keywords{monomial ideal ring, Stanley-Reisner ring, Davis-Januszkiewicz space, polarization,
polyhedral product.}

\thanks{A.\,B.\ was supported in part by a Rider University Summer Research Fellowship and 
grant number 210386 from the Simons Foundation; F.\,R.\,C.\ was supported partially by 
DARPA grant number 2006-06918-01.}



\begin{abstract}
A direct proof is presented of a form of Alvise Trevisan's result \cite{alvise}, that every monomial ideal ring is represented by the cohomology of a topological space. Certain of these rings are shown to be
realized by  polyhedral products indexed by simplicial complexes. 
\end{abstract}

\

\maketitle

\maketitle
\section{Introduction}
In the paper \cite{alvise}, Alvise Trevisan showed that every ring which is a quotient of an integral polynomial ring
by an ideal of monomial relations, can be realized as the integral cohomology ring of a topological space.
Moreover, he showed that the rings could be all realized with spaces which are generalized Davis-Januszkiewicz
spaces. These spaces are colimits over {\em multicomplexes\/} which are generalizations of simplicial complexes.

Here is presented a  direct proof of the ``realization'' part of Trevisan's theorem. It uses a result of
Fr\"{o}berg from \cite{fr} which asserts that a map known as ``polarization'' produces in a natural way, a 
regular sequence of degree-two elements. This allows for the realization of any monomial ideal ring by a 
certain pullback.

It is noted also that certain families of monomial ideal rings, beyond Stanley-Reisner rings, can be 
realized as generalized Davis-Januszkiewicz spaces based on ordinary simplicial complexes. Of course,
as Trevisan shows, multicomplexes are needed in general.

Through the paper, all cohomology is taken with {\em integral\/} coefficients.

\section{The main result}

Let $\mathbb{Z}[x_{1},\ldots,x_{n}]$ be a polynomial ring and 
\begin{equation}\label{eqn:m}
M = \big\{m_{j}\big\}_{j=1}^{r}, \qquad m_{j} = x_{1}^{t_{1j}}x_{2}^{t_{2j}}\cdots x_{n}^{t_{nj}}
\end{equation}

\nd be a set of minimal monomials, that is, no monomial divides another. Here, the exponent 
$t_{ij}$ might 
be equal to zero but every $x_i$ must appear in some $m_j$. Notice that 
the set $M$ is determined by the $n\times r$ matrix $(t_{ij} )$.
Denote by $I(M)$ the ideal in $\mathbb{Z}[x_{1},\ldots,x_{n}]$ generated
by the minimal monomials $m_{j}$ and set
\begin{equation}\label{eqn:mir}
A = A(M) =\mathbb{Z}[x_{1},\ldots,x_{n}]\big/I(M)
\end{equation}  

\nd a {\it monomial ideal ring\/}. From this is defined a second monomial ideal ring $A(\overline{M})$
with monomial ideal generated by square free monomials. For each $i = 1,2,\ldots,n$ 
set 
\begin{equation}\label{eqn:ti}
t_{i} = \text{max}\{t_{i1},t_{i2},\ldots,t_{ir}\}
\end{equation}

\nd the largest entry in the $i$-th row of $(t_{ij})$.   Next, introduce new 
variables $y_{i1},y_{i2},\ldots,y_{it_{i}}$ for each $i = 1,2,\ldots,n$.  For each monomial
$m_{j} = x_{1}^{t_{1j}}x_{2}^{t_{2j}}\cdots x_{n}^{t_{nj}}$, set
\begin{equation}\label{eqn:order}
\overline{m}_{j} = (y_{11}y_{12}\cdots y_{1t_{1j}})(y_{21}y_{22}\cdots y_{2t_{2j}})\;
\cdots \;(y_{n1}y_{n2}\cdots y_{nt_{nj}}).
\end{equation}

\nd Let $\overline{M} = \{\overline{m}_{j}\}_{j=1}^{r}$ and define an algebra $B$ by
\begin{equation}\label{eqn:am}
B = B(\overline{M}) = \mathbb{Z}[y_{11},y_{12},\ldots,y_{1t_{1}},y_{21},y_{22},\ldots,y_{2t_{2}}, \; \ldots\;,
y_{n1},y_{n2},\ldots,y_{nt_{n}}]\big/I(\overline{M}).
\end{equation}

\nd The monomials here are square-free so $B$ is a Stanley-Reisner algebra which
determines a simplicial complex $K(\overline{M})$. (This process which constructs $B$ from $A$
is known in the literature as {\em polarization}.) Associated to this simplicial complex is a fibration
$$Z\big(K(\overline{M});(D^{2},S^{1})\big) \longrightarrow \mathcal{DJ}(K(\overline{M})) \longrightarrow
BT^{d(\overline{M})}$$

\nd where $d(\overline{M}) = \sum_{1=1}^{n}{t_{i}}\;$, with $t_{i}$ as in \eqref{eqn:ti}, $\mathcal{DJ}(K(\overline{M}))$ is the 
Davis-Januszkiewicz space of the simplicial
complex $K(\overline{M})$ and $Z\big(K(\overline{M});(D^{2},S^{1})\big)$ is the moment-angle complex corresponding to  $K(\overline{M})$, (\cite{bp}).  Recall that the Davis-Januszkiewicz space has the property that
\begin{equation}\label{eqn:sr}
H^{*}\big(\mathcal{DJ}(K(\overline{M}))\big) \cong B.
\end{equation}

\nd Define next a diagonal map $\Delta\colon T^{n} \longrightarrow T^{d(\overline{M})}$ by
\begin{equation}\label{eqn:delta}
\Delta(x_{1},x_{2},\ldots,x_l) = \big(\Delta_{t_{1}}(x_{1}), \Delta_{t_{2}}(x_{2}),\ldots,\Delta_{t_{n}}(x_l)\big)                                 
\end{equation}

\nd where $\Delta_{t_{i}}(x_{i}) = (x_{i},x_{i},\ldots,x_{i}) \in T^{t_{i}}$. In the diagram below,
let $W(A)$ be defined as the pullback of the fibration.

\begin{equation}\label{eqn:manifold.diagram}
\begin{CD}
Z\big(K(\overline{M});(D^{2},S^{1})\big) @>>{=}>
Z\big(K(\overline{M});(D^{2},S^{1})\big)\\
@VV{}V              @V{}VV    \\
W(A) @>>{\widetilde{\Delta}}>
\mathcal{DJ}(K(\overline{M}))\\
@VV{}V              @VV{}V    \\
BT^{n} @>{B\Delta}>>BT^{d(\overline{M})}
\end{CD}
\end{equation}

\

\nd The diagram \eqref{eqn:manifold.diagram} 
extends to to the right and produces the fibration  

\begin{equation}\label{eqn:fib}
W(A) \stackrel{{\widetilde{\Delta}}}{\longrightarrow} 
\mathcal{DJ}(K(\overline{M}))  \stackrel{p}{\longrightarrow} BT^{d(\overline{M})-n}.
\end{equation}

\

\nd Recall that  $d(\overline{M}) = \sum_{1=1}^{n}{t_{i}}$ and choose generators

$$H^{\ast}(BT^{d(\overline{M})-n})\; \cong \;
\mathbb{Z}[u_{12},\ldots,u_{1t_{1}},u_{22},\ldots,u_{2t_{2}}, \; \ldots\;,
u_{n2},\ldots,u_{nt_{n}}],$$

\nd so that 
$$p^{*}(u_{ik_{i}})\; = \;y_{i1}-y_{ik_{i}}\quad i = 1,2,\ldots,n, \;\; k_{i} = 2,3,\ldots t_{i}.$$

\

\nd The fact that $p^{*}$ is determined by the diagonal $\Delta$ in diagram  \eqref{eqn:manifold.diagram},
allows this choice. Set $\theta_{ik_{i}}  \becomes p^{*}(u_{ik_{i}})$. The proposition following is a basic result
about the diagonal map $\Delta$, (the {\em polarization\/} map);  a proof may be found in 
\cite[page 30]{fr}.

\begin{prop}[Fr\"{o}berg]\label{prop:reg}
Over any field $k$, the sequence $\{\theta_{ik_{i}}\}$ is a 
regular sequence of degree-two elements in the ring $H^{*}\big(\mathcal{DJ}(K(\overline{M}));k\big)$. 
 \end{prop}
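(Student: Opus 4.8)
The plan is to reduce the statement to a purely combinatorial/algebraic fact about Stanley–Reisner rings and the specific linear forms $\theta_{ik_i} = y_{i1} - y_{ik_i}$, and then verify that fact directly by exhibiting an explicit monomial basis before and after quotienting. Recall from \eqref{eqn:sr} that $H^{*}\big(\mathcal{DJ}(K(\overline{M}));k\big) \cong B \otimes k$, the Stanley–Reisner ring of $K(\overline{M})$ over $k$; this is a standard-graded $k$-algebra generated in degree two by the classes $y_{ij}$ (doubled degree), with relations exactly the square-free monomials $\overline{m}_j$ of \eqref{eqn:order}. So the claim becomes: the sequence $\{\,y_{i1} - y_{ik_i} : 1 \le i \le n,\ 2 \le k_i \le t_i\,\}$ is a regular sequence in $B \otimes k$. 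Since regularity of a sequence of linear forms in a graded ring can be checked one form at a time, and the order can be taken as $y_{i1}-y_{i2},\ y_{i1}-y_{i3},\dots$ running through $i=1,\dots,n$, it suffices to show: after successively setting $y_{i2} = y_{i3} = \cdots = y_{it_i}$ equal to $y_{i1}$ for each $i$ (equivalently, quotienting $B$ by the ideal generated by the $\theta$'s), the resulting ring is precisely $A \otimes k$ — i.e. the monomial ideal ring $A(M)$ — and that at each intermediate stage the element being added is a nonzerodivisor.

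First I would set up the combinatorics of polarization cleanly: introduce for each $i$ the identification homomorphism $\varphi\colon B \to \mathbb{Z}[x_1,\dots,x_n]/I(M) = A$ sending every $y_{ij}$ to $x_i$. By the very definition \eqref{eqn:order} of $\overline{m}_j$, $\varphi$ carries the square-free generator $\overline{m}_j$ to $m_j$, so $\varphi$ is surjective with $I(M)$ in its image of relations; the content is that $\ker \varphi$ is generated exactly by the linear forms $y_{ij} - y_{i1}$. The standard way to see this, and the way I would pursue, is to produce an explicit vector-space basis of $B$ adapted to these forms. A basis for the Stanley–Reisner ring $B$ is given by the square-free-"support-allowed" monomials $\prod y_{ij}^{a_{ij}}$ whose support is a face of $K(\overline{M})$; by construction of $K(\overline{M})$ via polarization, a monomial lies in the basis iff for every $j$ it is \emph{not} divisible by $\overline{m}_j$, which translates back to the condition that the exponent vector, "compressed" by $a_i := \sum_j a_{ij}$, is not divisible by any $m_j$. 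I would then show that the monomials with all exponents supported on the first index (i.e. $a_{ij} = 0$ for $k_i \ge 2$) form a set of coset representatives for $B$ modulo the ideal $(\theta_{ik_i})$, and that this ideal has the right dimension in each degree, forcing the quotient to be $A \otimes k$ with the expected Hilbert series. Counting dimensions degree by degree — comparing $\dim_k B_d$ with $\dim_k (A\otimes k)_{d}$ plus the contribution of the ideal generated by the linear forms — is exactly the regular-sequence condition, via the additivity of Hilbert series for a linear nonzerodivisor: $\mathrm{Hilb}(R/\theta R, s) = (1-s)\,\mathrm{Hilb}(R,s)$ iff $\theta$ is a nonzerodivisor.

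The main obstacle is the combinatorial bookkeeping in the previous paragraph: showing that "compressing" a non-face exponent vector of $K(\overline{M})$ to its support-sums never accidentally produces something divisible by an $m_j$, and conversely, so that the basis of $B$ fibers correctly over the basis of $A$ under the identification map. Concretely, one must check that for a monomial $\mu = \prod y_{ij}^{a_{ij}}$ with $a_{ij} \le 1$ wherever it matters and $\sum_j a_{ij} = c_i$, $\mu$ is a non-face of $K(\overline{M})$ precisely when $(c_1,\dots,c_n)$ is in $I(M)$; this is where Fröberg's polarization lemma genuinely lives, and where one uses that the $m_j$ are \emph{minimal} (no monomial divides another) so that divisibility is detected on the nose. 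Once this bijection of bases is in hand, the regular-sequence statement follows formally: at the stage where we have already quotiented out some of the $\theta$'s, the ring is the Stanley–Reisner ring of a partial polarization, the next $\theta$ is again of the form $y_{i1} - y_{ik_i}$ for an as-yet-unidentified pair, and the same basis argument shows it is a nonzerodivisor with quotient the next partial polarization — so an induction on the number of $\theta$'s closes the proof. I would remark that this reproves Fröberg's result in the form needed, rather than merely citing \cite[page 30]{fr}, since the topological setup makes the identification map $\varphi$ and the linear forms $\theta_{ik_i}$ completely explicit.
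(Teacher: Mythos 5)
The paper does not actually prove this proposition --- it only cites \cite[page 30]{fr} --- so any self-contained argument is necessarily ``a different route.'' Your overall strategy is a legitimate one and is essentially the standard Hilbert-series proof of polarization: identify $B/(\theta_{ik_i})\otimes k$ with $A\otimes k$, establish the identity $\mathrm{Hilb}(A\otimes k,s)=(1-s)^{c}\,\mathrm{Hilb}(B\otimes k,s)$ with $c=d(\overline{M})-n$, and conclude regularity from the fact that for a linear form $\theta$ one has $\mathrm{Hilb}(R/\theta R)=(1-s)\mathrm{Hilb}(R)+s\,\mathrm{Hilb}(0:_R\theta)$, so equality of the iterated identity forces every colon ideal to vanish. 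The identification of the quotient with $A\otimes k$ is also correct and easy, since killing the $\theta$'s identifies each $y_{ik}$ with $y_{i1}$ and carries $\overline{m}_j$ to $m_j$.

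The gap is the combinatorial lemma you rely on to get the Hilbert-series identity. You assert that a monomial $\prod y_{ik}^{a_{ik}}$ lies in the monomial basis of $B$ (equivalently, is divisible by no $\overline{m}_j$) if and only if its ``compression'' $\prod x_i^{a_i}$, $a_i=\sum_k a_{ik}$, is divisible by no $m_j$, so that the basis of $B$ fibers over the basis of $A$. This is false in both directions you need. For $M=\{x_1^2\}$ one has $B=\mathbb{Z}[y_{11},y_{12}]/(y_{11}y_{12})$, and $y_{11}^2$ is a basis element whose compression $x_1^2$ is zero in $A=\mathbb{Z}[x_1]/(x_1^2)$. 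Even for squarefree monomials the claim fails: for $M=\{x_1^3,\,x_1^2x_2\}$ the monomial $y_{12}y_{13}y_{21}$ is divisible by neither $\overline{m}_1=y_{11}y_{12}y_{13}$ nor $\overline{m}_2=y_{11}y_{12}y_{21}$, yet its compression $x_1^2x_2$ lies in $I(M)$. So the proposed bijection of bases does not exist, and the Hilbert-series identity does not follow from your argument. It can be repaired without bases: the Taylor complex is a free resolution for any monomial generating set, so $\mathrm{Hilb}(S/I)\cdot(1-s)^{\#\mathrm{vars}}=\sum_{T}(-1)^{|T|}s^{\deg \mathrm{lcm}(m_j:\,j\in T)}$, and since polarization always uses the initial segment $y_{i1},\ldots,y_{i,t_{ij}}$ one has $\deg\mathrm{lcm}(\overline{m}_j: j\in T)=\deg\mathrm{lcm}(m_j: j\in T)$ for every $T$; the two numerators therefore coincide and the desired identity drops out. (Alternatively, run the classical one-variable-at-a-time depolarization argument.) Note also that minimality of $M$ --- which you flag as the crux --- plays no essential role: polarization works for an arbitrary monomial generating set.
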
 

\

This result allows for a direct proof of the realization theorem.
\begin{thm}\label{thm:dj}
There is an isomorphism of rings
$$H^{*}\big(W(A);\mathbb{Z}\big) \longrightarrow A(M).$$
\end{thm}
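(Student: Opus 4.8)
The plan is to compute $H^{*}(W(A))$ by means of the Eilenberg--Moore spectral sequence (or, equivalently, the Serre spectral sequence) of the fibration \eqref{eqn:fib}, exploiting the fact that the base $BT^{d(\overline{M})-n}$ has polynomial cohomology and that, by Proposition~\ref{prop:reg}, the classes $\theta_{ik_{i}} = p^{*}(u_{ik_{i}})$ form a regular sequence in $H^{*}(\mathcal{DJ}(K(\overline{M})))\cong B$. Since $p^{*}$ is injective onto a polynomial subalgebra on a regular sequence, the Eilenberg--Moore spectral sequence of the pullback square \eqref{eqn:manifold.diagram} collapses, and one obtains
\begin{equation*}
H^{*}\big(W(A)\big)\;\cong\; B\big/(\theta_{12},\ldots,\theta_{1t_{1}},\ldots,\theta_{n2},\ldots,\theta_{nt_{n}})
\;=\; B\big/\big(y_{i1}-y_{ik_{i}}\ :\ i=1,\ldots,n,\ k_{i}=2,\ldots,t_{i}\big),
\end{equation*}
as a ring. (One must check that no $\mathrm{Tor}$ of higher homological degree survives; this is exactly the content of regularity, which guarantees that the Koszul resolution of $k$ over $H^{*}(BT^{d(\overline{M})-n})$ pulled back to $B$ stays acyclic, so $\mathrm{Tor}$ is concentrated in homological degree zero and the spectral sequence degenerates. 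Integrality is not an issue because Proposition~\ref{prop:reg} holds over $\mathbb{Z}$ — the $\theta$'s are part of a polynomial generating set of $B$ after the obvious change of variables — so the quotient is a free $\mathbb{Z}$-module and there are no extension problems.)

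The next step is purely algebraic: identify the quotient ring $B/(y_{i1}-y_{ik_{i}})$ with $A(M)$. Setting $y_{ik_{i}} = y_{i1}$ for all $k_{i}$ amounts to substituting a single variable, say $x_{i} := y_{i1}$, for each block $y_{i1},\ldots,y_{it_{i}}$; under this substitution the square-free generator $\overline{m}_{j}$ of \eqref{eqn:order} maps to $x_{1}^{t_{1j}}x_{2}^{t_{2j}}\cdots x_{n}^{t_{nj}} = m_{j}$, because the block $(y_{i1}y_{i2}\cdots y_{it_{ij}})$ has exactly $t_{ij}$ factors and all become $x_{i}$. Hence the ideal $I(\overline{M})$ maps onto $I(M)$, and one checks conversely that the induced surjection $\mathbb{Z}[x_{1},\ldots,x_{n}]\to B/(y_{i1}-y_{ik_{i}})$ has kernel exactly $I(M)$ — this is the standard fact that polarization is a faithfully flat (in fact, the variables $\theta_{ik_{i}}$ are a regular sequence) deformation, so the colon ideals match. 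Thus $B/(\theta_{ik_{i}})\cong \mathbb{Z}[x_{1},\ldots,x_{n}]/I(M) = A(M)$.

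Finally I would verify that the ring isomorphism produced by the spectral sequence is the same one as this algebraic identification, i.e. that it is natural. This follows from tracing the generators: $H^{2}(W(A))$ is a free $\mathbb{Z}$-module with basis the images of $y_{i1}$ ($i=1,\ldots,n$) — indeed $\widetilde{\Delta}^{*}$ kills precisely the span of the $\theta$'s and $B\Delta^{*}\colon H^{2}(BT^{d(\overline{M})})\to H^{2}(BT^{n})$ is the map dual to \eqref{eqn:delta} sending each $y_{ik}$ to $x_{i}$ — and the ring structure on a generated-in-degree-two algebra is determined by the degree-two part together with the relations, which are exactly the images of the $\overline{m}_{j}$.

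The main obstacle I expect is the collapse/degeneration claim together with the extension problem over $\mathbb{Z}$: one needs to know not just that the associated graded of $H^{*}(W(A))$ is $A(M)$ but that there is an honest ring isomorphism. The cleanest way around this is to observe that the fibration \eqref{eqn:fib} is totally non-homologous to zero with $\mathbb{Z}$ coefficients — because the regular sequence $\{\theta_{ik_{i}}\}$ has degree two and $H^{*}(\mathcal{DJ}(K(\overline{M})))$ is concentrated in even degrees and torsion-free — so the Serre spectral sequence has $E_{2} = H^{*}(BT^{d(\overline{M})-n})\otimes H^{*}(W(A))$ with no room for differentials, and the Leray--Hirsch theorem (in its ring form) identifies $H^{*}(\mathcal{DJ}(K(\overline{M})))$ as a free module over $H^{*}(BT^{d(\overline{M})-n})$; quotienting by the ideal generated by the image of $H^{>0}(BT^{d(\overline{M})-n})$ then yields $H^{*}(W(A))$ as a ring, and that quotient is $B/(\theta_{ik_{i}})$ by construction.
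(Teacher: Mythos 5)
Your overall strategy coincides with the paper's: the Eilenberg--Moore spectral sequence of \eqref{eqn:fib}, Fr\"oberg's regularity (Proposition~\ref{prop:reg}) to concentrate $\mathrm{Tor}$ in homological degree zero, the Serre spectral sequence (or Leray--Hirsch) to recover the ring structure, and finally the algebraic identification of $B/(y_{i1}-y_{ik_{i}})$ with $A(M)$. That last algebraic step you carry out more explicitly than the paper does, and it is correct.

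There is, however, a genuine gap in how you pass from field coefficients to $\mathbb{Z}$, and it occurs in both of the places where you address it. First, your parenthetical claim that Proposition~\ref{prop:reg} ``holds over $\mathbb{Z}$'' because the $\theta$'s are ``part of a polynomial generating set of $B$ after the obvious change of variables'' is not a proof: the elements $\theta_{ik_{i}}=y_{i1}-y_{ik_{i}}$ are part of a generating set of the ambient polynomial ring $\mathbb{Z}[y_{11},\ldots,y_{nt_{n}}]$, but $B$ is the quotient by the monomial ideal $I(\overline{M})$, and whether these linear forms remain a regular sequence \emph{modulo} that ideal is precisely the nontrivial content of Fr\"oberg's theorem --- which is stated only over a field. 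Second, your fallback argument --- that the integral Serre spectral sequence of \eqref{eqn:fib} has ``no room for differentials'' because the total space $\mathcal{DJ}(K(\overline{M}))$ has even, torsion-free cohomology --- is an invalid inference: evenness of the total space and base says nothing a priori about evenness of the fiber (compare $S^{1}\to ES^{1}\to BS^{1}$, where total space and base are even but the fiber is not). What is actually needed, and what the paper supplies, is a \emph{prior} proof that $H^{*}(W(A);\mathbb{Z})$ is concentrated in even degrees: run the Eilenberg--Moore argument over $\mathbb{Q}$ to conclude that the odd-degree integral cohomology is all torsion, then over $\mathbb{F}_{p}$ for each prime $p$ to rule out $p$-torsion, and combine via the universal coefficient theorem. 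Only after that does the integral Serre spectral sequence collapse and exhibit $H^{*}(W(A);\mathbb{Z})$ as the quotient of $H^{*}(\mathcal{DJ}(K(\overline{M})))$ by the ideal generated by the image of $p^{*}$. With that repair your argument goes through and agrees with the paper's.
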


\begin{proof}
Working over a field $k$ and following Masuda-Panov, \cite[Lemma 2.1]{mp}, we use the Eilenberg-Moore 
spectral sequence associated to the 
fibration \eqref{eqn:fib}. It has

$$E_{2}^{\ast,\ast} = \rm{Tor}^{\ast,\ast}_{H^{\ast}(BT^{d(\overline{M})-n})}
(H^{*}\big(\mathcal{DJ}(K(\overline{M})),k\big).$$

\

\nd Now $H^{*}\big(\mathcal{DJ}(K(\overline{M}))\big)$ is free as an 
$H^{\ast}(BT^{d(\overline{M})-n})$-module by Proposition \ref{prop:reg}, so
\begin{align*}
\rm{Tor}^{\ast,\ast}_{H^{\ast}(BT^{d(\overline{M})-n})}
(H^{*}\big(\mathcal{DJ}(K(\overline{M}))\big),k) &= 
\rm{Tor}^{0,\ast}_{H^{\ast}(BT^{d(\overline{M})-n})}
(H^{*}\big(\mathcal{DJ}(K(\overline{M}))\big),k)\\
&=  H^{*}\big(\mathcal{DJ}(K(\overline{M}))\big) \otimes_{H^{\ast}(BT^{d(\overline{M})-n})}k\\
&=  H^{*}\big(\mathcal{DJ}(K(\overline{M}))\big)\big/p^{*}(H^{>0}(BT^{d(\overline{M})-n})).
\end{align*}

\nd It follows that the Eilenberg-Moore spectral sequence collapses at the $E_{2}$ term and hence, as groups,
$$H^{*}\big(W(A)\big) =  H^{*}\big(\mathcal{DJ}(K(\overline{M}))\big)\big/p^{*}(H^{>0}(BT^{d(\overline{M})-n}))$$

\nd from which we conclude that $H^{*}\big(W(A);k\big)$ is concentrated in even degree. 
Taking $k = \mathbb{Q}$  gives the result that in odd degree, $H^{*}\big(W(A);\mathbb{Z}\big)$ consists
of torsion only. Unless this torsion is zero, the argument above with $k = \mathbb{F}_{p}$ for an appropriate $p$,
implies a contradiction. It follows that $H^{*}\big(W(A);\mathbb{Z}\big)$ is concentrated in even degree.

\begin{lem}\label{lem:serress}
The integral Serre spectral sequence of the fibration \eqref{eqn:fib} collapses.
\end{lem}

\begin{proof}
The spaces in the fibration have integral cohomology concentrated in even degrees.
\end{proof}

\

\nd The $E_{2}$-term of the Serre spectral sequence is
$$H^{*}\big(W(A); \mathbb{Z}\big) \otimes 
H^{*}\big(BT^{d(\overline{M})-n}; \mathbb{Z}\big).$$

\nd It follows that, as a ring, $H^{*}\big(W(A); \mathbb{Z}\big)$ is the quotient of 
$H^{*}\big(\mathcal{DJ}(K(\overline{M}))\big)$ by the two-sided ideal $L$ generated by
the image of $p^{*}$. So there is an isomorphism of graded rings,

$$H^{*}\big(W(A); \mathbb{Z}\big) \longrightarrow 
H^{*}\big(\mathcal{DJ}(K(\overline{M}))\big)\big/L \cong A(\overline{M})\big/L \cong A(M)$$

\

\nd completing the proof of Theorem \ref{thm:dj}. \end{proof}

\section{On the geometric realization of certain monomial ideal rings by ordinary polyhedral products}
In this section,  polyhedral products, \cite{bbcg},  involving  finite
and infinite complex projective spaces are used to realize certain classes of monomial ideal rings.
As noted earlier, generalizations of the Davis-Januszkiewicz spaces to the realm of multicomplexes 
are required in order to realize all monomial ideal rings, see Trevisan \cite{alvise}.

The class which can be realized by ordinary polyhedral products is restricted to those
monomials 
$$M = \big\{m_{j}\big\}_{j=1}^{r}, \qquad m_{j} = x_{1}^{t_{1j}}x_{2}^{t_{2j}}\cdots x_{n}^{t_{nj}}$$

\nd of \eqref{eqn:m}, which satisfy the condition:

\begin{itemize}
\item[$\divideontimes$] $t_{ij}$ is constant over all monomials  $m_{j}$ which have
$t_{ij}$ and {\em at least one other exponent\/} both non-zero.
\end{itemize}

\nd In particular, a monomial ring of the form 
\begin{equation}\label{eqn:ex}
\mathbb{Z}[x_{1},x_{2},x_{3}]\big/\langle x_{1}^{2}x_{2},x_{1}^{2}x_{3}^{4}, x_{3}^{5}\rangle
\end{equation}

\nd {\em can\/} be realized by an ordinary polyhedral product.
As usual, let $(\underline{X},\underline{A})$ denote a family of CW pairs
$$\{(X_{1},A_{1}), (X_{2},A_{2}), \ldots,(X_{n},A_{n})\}.$$

\nd Given a monomial ring $A(M)$ of the form \eqref{eqn:mir}, satisfying the condition $\divideontimes$ above,
a simplicial complex $K$ and a family of pairs $(\underline{X},\underline{A})$ will be specified so that
$$H^{*}\big(Z(K; (\underline{X},\underline{A})); \mathbb{Z}\big) = A(M)$$

\nd where $Z(K; (\underline{X},\underline{A})$ represents a polyhedral product as defined in \cite{bbcg}.
\begin{construction}\label{con:k}
Let $K$ be the simplicial complex on  $n$ vertices $\{v_{1},v_{2},\ldots,v_{n}\}$  which has
a minimal non-face corresponding to
each $m_{i}$ having {\em at least two\/} non-zero exponents. If $m_{i}$ has non-zero exponents
$$t_{j_{1}i},t_{j_{2}i},\ldots, t_{j_{t}i}$$

\nd then $K$ will have a corresponding minimal non-face $\{v_{j_{1}},v_{j_{2}},\ldots,v_{j_{t}}\}$. 
Moreover, these will be the only minimal non-faces of $K$. 
\end{construction}

\nd For example, the ring \eqref{eqn:ex} above will have associated to it, the
simplicial complex $K$ on vertices $\{v_{1},v_{2}, v_{3}\}$ and will have minimal non-faces 
$\{v_{1},v_{2}\}$ and $\{v_{1},v_{3}\}$. So, $K$ will be the disjoint union of a point and a one-simplex.

\

For the set of monomials $M$ satisfying condition $\divideontimes$, the cases 
following are distinguished in terms of \eqref{eqn:m} for fixed $i \in \{1,2,\ldots,n\}$.
\begin{enumerate}
\item For certain $j$, $t_{ij} = 1$, $t_{i'j}\neq 0$ for some $i' \neq i$ and $t_{ik}=0$ otherwise. 
\item For certain $j$, $t_{ij} = q_{i} > 1$, $t_{i'j}\neq 0$ for some $i' \neq i$ and $t_{ik}=0$ otherwise. 
\item $m_{j} = x_{i}^{s_{i}}$ for some $j$ and $t_{ik} = 0$ for $k\neq j$.
\item $m_{j} = x_{i}^{s_{i}}$ for some $j$ and  if $t_{ik} \neq 0$ for $k\neq j$, then $t_{ik} = q_{i} < s_{i}$.
\end{enumerate}

\

\nd With this classification in mind, define a family of CW-pairs 

$$(\underline{X},\underline{A}) = \{(X_{i},A_{i})\colon i = 1,\ldots,n\}$$

\nd by
\begin{equation}\label{eqn:cases}(X_{i},A_{i}) \quad = \quad \begin{cases}
\;(\mathbb{C}P^{\infty},\ast)\qquad  \;\;\;\text{if}\; i \;\text{satisfies}\; (1),\\
\;(\mathbb{C}P^{\infty},\mathbb{C}P^{q_{i}-1})\qquad \;\;\;\text{if}\; i \;\text{satisfies}\; (2),\\
\;(\mathbb{C}P^{s_{i}-1},\ast) \qquad \text{if}\; i \;\text{satisfies}\; (3),\\
\;(\mathbb{C}P^{s_{i}-1},\mathbb{C}P^{q_{i}-1}) \qquad  \text{if}\; i \;\text{satisfies}\; (4).\\	
\end{cases}
\end{equation}

\

\nd The next theorem describes the polyhedral products which have cohomology realizing the monomial ideal rings
satisfying condition $\divideontimes$.
\begin{thm}\label{thm:pp}
Let $A(M)$ be a monomial ring of the form \eqref{eqn:mir}, satisfying the condition $\divideontimes$ and
$K$, the simplicial complex defined by Construction \ref{con:k}, then 
$$H^{*}\big(Z(K; (\underline{X},\underline{A})); \mathbb{Z}\big) = A(M)$$

\nd where $(X,A)$ is the pair specified by \eqref{eqn:cases}.
\end{thm}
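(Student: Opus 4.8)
The plan is to compute $H^{*}\big(Z(K;(\underline{X},\underline{A}));\mathbb{Z}\big)$ directly and to identify it with $A(M)$, using throughout that every space occurring in \eqref{eqn:cases} is a finite or infinite complex projective space and so has torsion-free cohomology concentrated in even degrees. The first step would be a structural lemma, valid for \emph{any} finite simplicial complex $L$ on vertices $w_{1},\dots,w_{m}$ and \emph{any} family of pairs of the form $(\mathbb{C}P^{N_{i}},\mathbb{C}P^{M_{i}})$ (with the convention $\mathbb{C}P^{0}=\ast$) and $0\le M_{i}\le N_{i}\le\infty$: the ring $H^{*}\big(Z(L;(\underline{X},\underline{A}));\mathbb{Z}\big)$ is torsion-free, of finite type, concentrated in even degrees, and generated as a ring by the degree-two classes $x_{1},\dots,x_{m}$ obtained by pulling back the polynomial generators of $H^{*}\big(\prod_{i}X_{i}\big)$ along the inclusion $Z(L;(\underline{X},\underline{A}))\hookrightarrow Z(L;(\underline{X},\underline{X}))=\prod_{i}X_{i}$. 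I would prove this by induction on $m$ using the standard homotopy pushout for polyhedral products,
$$Z(L)\;=\;\big(X_{m}\times Z(\mathrm{lk}_{L}(w_{m}))\big)\;\cup_{\,A_{m}\times Z(\mathrm{lk}_{L}(w_{m}))}\;\big(A_{m}\times Z(L\setminus w_{m})\big),$$
in which $Z(-)$ denotes the polyhedral product on the appropriate index set. By induction the three polyhedral products on the right have torsion-free, even cohomology; moreover the restriction map $H^{*}(\mathbb{C}P^{N_{m}})\to H^{*}(\mathbb{C}P^{M_{m}})$ is surjective, so the associated restriction $H^{*}\big(X_{m}\times Z(\mathrm{lk}_{L}(w_{m}))\big)\to H^{*}\big(A_{m}\times Z(\mathrm{lk}_{L}(w_{m}))\big)$ is surjective, all Mayer--Vietoris connecting maps vanish, and the sequence breaks into short exact sequences of torsion-free even-degree groups relating $H^{*}(Z(L))$ to the cohomology of the three smaller polyhedral products; a short diagram chase on degree-two parts then yields the asserted generation in degree two. (Alternatively the lemma follows from the cohomology computation for polyhedral products in \cite{bbcg}, whose input groups are all free and even here.)

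Granting the lemma, $\psi\colon\mathbb{Z}[x_{1},\dots,x_{n}]\to H^{*}\big(Z(K;(\underline{X},\underline{A}))\big)$ is a surjective ring homomorphism, and the theorem reduces to the statement that $\ker\psi=I(M)$. For the containment $I(M)\subseteq\ker\psi$ there are two kinds of generators. A pure power $m_{j}=x_{i}^{s_{i}}$ (cases $(3)$ and $(4)$) has $X_{i}=\mathbb{C}P^{s_{i}-1}$, so $x_{i}^{s_{i}}$ vanishes already in $H^{*}(X_{i})$, hence in $H^{*}(Z(K;(\underline{X},\underline{A})))$. If $m_{j}$ has at least two non-zero exponents, condition $\divideontimes$ forces each of those exponents to equal the common value $q_{i}$ (with $q_{i}=1$ in case $(1)$), so $m_{j}=\prod_{i\in\sigma}x_{i}^{q_{i}}$ where, by Construction \ref{con:k}, $\sigma=\{v_{j_{1}},\dots,v_{j_{t}}\}$ is a minimal non-face of $K$. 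Because $\sigma\notin K$, every face of $K$ omits some vertex of $\sigma$; hence $Z(K;(\underline{X},\underline{A}))=V_{1}\cup\dots\cup V_{t}$ where $V_{\ell}=\{z:z_{j_{\ell}}\in A_{j_{\ell}}\}$, and on $V_{\ell}$ the class $x_{j_{\ell}}^{q_{j_{\ell}}}$, being pulled back from $X_{j_{\ell}}$, factors through $H^{*}(A_{j_{\ell}})$ and so vanishes (as $A_{j_{\ell}}$ is $\ast$ or $\mathbb{C}P^{q_{j_{\ell}}-1}$); thus $x_{j_{\ell}}^{q_{j_{\ell}}}$ lifts to $H^{*}\big(Z(K;(\underline{X},\underline{A})),V_{\ell}\big)$. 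A Lusternik--Schnirelmann-type argument with relative cup products (the $A_{i}\hookrightarrow X_{i}$ being cofibrations) then shows that $m_{j}=x_{j_{1}}^{q_{j_{1}}}\cdots x_{j_{t}}^{q_{j_{t}}}$ lifts to $H^{*}\big(Z(K;(\underline{X},\underline{A})),\,V_{1}\cup\dots\cup V_{t}\big)=H^{*}\big(Z(K;(\underline{X},\underline{A})),\,Z(K;(\underline{X},\underline{A}))\big)=0$, so $m_{j}=0$.

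Consequently $\psi$ induces a surjection of graded rings $A(M)\to H^{*}\big(Z(K;(\underline{X},\underline{A}))\big)$. Both sides are torsion-free of finite rank in each degree, so once one knows that their Poincaré series coincide, this surjection is automatically an isomorphism, which completes the proof. The Poincaré series of $A(M)$ is the usual inclusion--exclusion expression over the least common multiples of the $m_{j}$, and that of the target can be extracted from the Mayer--Vietoris recursion of the first step, or more cleanly from the stable splitting of \cite{bbcg}. The step I expect to be the main obstacle is exactly this last one: verifying that the combinatorial datum $(K,(\underline{X},\underline{A}))$ assembled from $M$ reproduces the Hilbert function of $A(M)$. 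It is there, and in the identification of each multi-variable $m_{j}$ with $\prod_{i\in\sigma}x_{i}^{q_{i}}$ above, that hypothesis $\divideontimes$ is genuinely needed; the remaining steps are formal.
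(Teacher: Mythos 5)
Your argument for the containment $I(M)\subseteq\ker\psi$ is sound and takes a genuinely different route from the paper's: the paper obtains the relations $\prod_{i\in\sigma}x_{i}^{q_{i}}=0$ by comparing with $Z(K;(\underline{\mathbb{C}P}^{\infty},\underline{\mathbb{C}P}^{Q-1}))$ and quoting \cite[Theorem 10.5]{bbcg3} (together with the proof of \cite[Lemma 10.3]{bbcg3} for surjectivity of $\psi$), whereas your relative cup-product argument over the cover $V_{1}\cup\cdots\cup V_{t}$ and your Mayer--Vietoris induction reprove those inputs from scratch. That half is acceptable, if somewhat more laborious.

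The genuine gap is the reverse containment $\ker\psi\subseteq I(M)$, which you explicitly defer to a comparison of Poincar\'e series that you do not carry out. This is not a routine verification: the Hilbert series of $A(M)$ is an inclusion--exclusion over the lcm lattice of the $m_{j}$, and matching it against the additive structure of the polyhedral product (via the splitting of \cite{bbcg} or your Mayer--Vietoris recursion) is comparable in difficulty to the theorem itself; as written, the proof is incomplete at exactly the point where the content lies. The paper closes this step without any Poincar\'e series, by two retraction arguments. First, for each vertex $v_{k}$ the sub-product $W_{k}=\mathbb{C}P^{q_{1}-1}\times\cdots\times X_{k}\times\cdots\times\mathbb{C}P^{q_{n}-1}$, corresponding to the simplex $\{v_{k}\}\in K$, maps to $Z(K;(\underline{X},\underline{A}))$ so that the composite to $\prod_{i}X_{i}$ is the natural inclusion; since $H^{*}(W_{k})$ is an explicit tensor product of truncated polynomial rings, any monomial surviving there survives in $H^{*}(Z(K;(\underline{X},\underline{A})))$, ruling out extra monomial relations. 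Second, if $\omega$ is a monomial not divisible by any $m_{j}$, its support $\sigma$ is a face of $K$, hence a full subcomplex, and by \cite[Lemma 2.2.3]{ds} the ring $H^{*}(Z(\sigma;(\underline{X},\underline{A})))$ --- the cohomology of a product of projective spaces, in which $\omega\neq 0$ --- is a direct summand of $H^{*}(Z(K;(\underline{X},\underline{A})))$, ruling out linear relations such as your \eqref{eqn:relation} among the surviving monomials. You need either to import these two retractions or to actually perform and match the Hilbert-series computation; without one of these, the proof does not establish injectivity of $A(M)\to H^{*}(Z(K;(\underline{X},\underline{A})))$.
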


\nd \begin{rem} The improvement here over \cite[Theorem 10.5]{bbcg3} consists of the inclusion of
cases (3) and (4) above. The polyhedral products which realize the monomial ideal rings discussed
in \cite{bbcg} have $X_{i} = \mathbb{C}P^{\infty}$ for all $i = 1,2,\ldots,n$.
\end{rem}

\begin{proof}[Proof of Theorem \ref{thm:pp}]
Set $Q = (q_{1},q_{2},\ldots, q_{n})$ with $q_{i}\geq 1$ for all $i$ and write the spaces $A_{i}$
of \eqref{eqn:cases} as $\mathbb{C}P^{q_{i}-1}$ where $q_{i}=1$ if $A_{i}= \ast$, a point. Write

$$(\underline{X},\underline{A}) = (\underline{X},\underline{\mathbb{C}P}^{Q-1}) =
\{(X_{i},\mathbb{C}P^{q_{i}-1})\colon i=1.2.\ldots,n\}$$

\

\nd and consider the commutative diagram

\begin{equation}\label{eqn:basic.diagram}
\begin{CD}
H^{*}(\textstyle{\prod_{i=1}^{n}}X_{i}) @<<{p^{*}}<
H^{*}(\textstyle{\prod_{i=1}^{n}}\mathbb{C}P^{\infty})\\
@VV{i^{*}}V              @V{k^{*}}VV    \\
H^{*}\big(Z(K;(\underline{X},\underline{\mathbb{C}P}^{Q-1}))\big) @<{h^{*}}<<
H^{*}\big(Z(K;(\underline{\mathbb{C}P}^{\infty},\underline{\mathbb{C}P}^{Q-1}))\big)
\end{CD}
\end{equation}

\

\nd induced by the various inclusion maps. According to \cite[Theorem 10.5]{bbcg3}, there is an
isomorphism of rings
$$H^{*}\big(Z(K;(\underline{\mathbb{C}P}^{\infty},\underline{\mathbb{C}P}^{Q-1}))\big)\;
\longrightarrow\; \mathbb{Z}[x_{1},\ldots,x_{n}]\big/I(M^{Q})$$

\nd where $I(M^{Q})$ is the ideal generated by all monomials 
$x_{i_{1}}^{q_{i_{1}}},x_{i_{2}}^{q_{i_{2}}},\ldots,x_{i_{k}}^{q_{i_{k}}}$ corresponding to the minimal
non-faces $\{v_{i_{1}},v_{i_{2}},\ldots ,v_{i_{k}}\}$ of $K$. Moreover, the proof of 
\cite[Lemma 10.3]{bbcg3} shows that the composition $i^{*}p^{*}$ is a surjection.
The commutativity of diagram 
\eqref{eqn:basic.diagram} implies that these relations all hold in 
$H^{*}\big(Z(K;(\underline{X},\underline{\mathbb{C}P}^{Q-1}))\big)$. In addition to these, the relation
$x_{i}^{s_{i}} = 0$ is included for each $i$ satisfying $X_{i} = \mathbb{C}P^{s_{i}-1}$. These relations
account for all the relations determined by $I(M)$. The remainder of the argument shows that $I(M)$
determines all relations in $H^{*}\big(Z(K; (\underline{X},\underline{A})); \mathbb{Z}\big)$.
Consider now the space

$$W_k  \;=\;  \mathbb{C}P^{q_1-1} \times \cdots \times \mathbb{C}P^{q_{k-1}-1} \times
X_{k} \times \mathbb{C}P^{q_{k+1}-1} \times \cdots 
\times \mathbb{C}P^{q_{n}-1}$$

\

\nd corresponding to the simplex $\{v_{k}\} \in K$, consisting of a single vertex.  The composition 

$$W_k \longrightarrow Z(K;(\underline{X}, \underline{\mathbb{C}P}^{Q-1})) 
\longrightarrow \textstyle{\prod_{i=1}^{n}X_{i}}$$ 

\

\nd factors the natural inclusion 
$W_k \longrightarrow \prod_{i=1}^{n}X_{i}$. From this observation follows the fact that
no other monomial relations occur in $H^{*}\big(Z(K;(\underline{X},\underline{\mathbb{C}P}^{Q-1}))\big)$
other than those determined by $I(M)$. Suppose next that there is a linear relationship of the form

\begin{equation}\label{eqn:relation}
a\omega = \sum_{i=1}^{k}a_{i}\omega_{i}
\end{equation}

\

\nd where $a, a_{i }\in \mathbb{Z}$ and $\omega, \omega_{i}$ are monomials in the 
$x_{i}, i = 1,2,\ldots,n$. Without loss of generality, $\omega$ and  $\omega_{i}$ can be assumed to be
not divisible by any of the monomials in $M$. Suppose 
$\omega = x_{j_{1}}^{\lambda_{1}}x_{j_{2}}^{\lambda_{2}}\cdots x_{j_{l}}^{\lambda_{l}}$, then
$\sigma = \{v_{j_{1}}, v_{j_{2}},\ldots,v_{j_{l}}\} \in K$ is a simplex and so is a full subcomplex of
$K$. (The corresponding polyhedral product $Z(\sigma;(\underline{X},\underline{\mathbb{C}P}^{Q-1}))$
is a product of finite and infinite complex projective spaces.)
This implies, by \cite[Lemma 2.2.3]{ds}, that 
$H^{*}\big(Z(\sigma;(\underline{X},\underline{\mathbb{C}P}^{Q-1}))\big)$ must be a direct
summand in $H^{*}\big(Z(K;(\underline{X},\underline{\mathbb{C}P}^{Q-1}))\big)$ contradicting the relation \eqref{eqn:relation}.\end{proof}

\bibliographystyle{amsalpha}

\end{document}